\newtheorem{theorem}            {Theorem}[section]
\newtheorem{proposition}        [theorem]{Proposition}
\title{Improved Average Bounds for an Inequality of Alladi, Erdös, and Vaaler}
\author{Cooper O'Kuhn}
\begin{document}
\maketitle

\thispagestyle{empty}

\begin{abstract}
    We prove an averaged version of a claim suspected to be true by Alladi, Erdös, and Vaaler.  Qualitatively, the result states that a divisor sum of a multiplicative function, which obeys certain size constraints, derives most of its value from its ``small" divisors.  
\end{abstract}

\section{Introduction}

Alladi, Erdos, and Vaaler [AEV] suspected that, for every squarefree $n$, if $h$ is a multiplicative function with $0 \leq h(p) \leq c \leq \frac{1}{k - 1}$ for some integer $k \geq 2$ and every prime~$p$, then

\begin{align}
    \sum_{d | n} h(d) \leq (4 + o(1)) \sum_{\substack{d | n \\ d \leq n ^ \frac{1}{k}}} h(d) \label{main conj}
\end{align}

where the $o(1)$ tends to zero as the number of distinct prime divisors $\omega(n)$ tends to infinity. One qualitative interpretation of this is that a divisor sum of a sufficiently small multiplicative function derives most of its value from its small divisors. It can also be thought of as an extension of the classical identity 
$$\sum_{d|n} 1 = 2 \sum_{\substack{d|n \\ d \leq \sqrt{n}}} 1$$
which holds when $n$ is not square, though since squarefree numbers are the chief consideration here, we've omitted that case.

It is shown in \cite{ALLADI1989183} that the bound $h(p) \leq \frac{1}{k - 1}$ is sharp, since if $h$ is taken to be bigger, the inequality \eqref{main conj} breaks down for $n$ with certain constraints on its prime divisors. This inequality was proven unconditionally with the constant $4$ replaced with $2k$ in \cite{ALLADI1989183}, which was subsequently improved to $k$ by Soundararajan in \cite{SOUNDARARAJAN1992225}. It has since been unknown whether $k$-dependence could be removed. Letting $\mu(n)$ denote the Möbius function, we prove the following averaged result.

\begin{theorem}
\label{thm:constant on primes case}
Let $h$ be a multiplicative function such that $h(p) = c$ for some $0 \leq c < \frac{1}{k - 1}$ and for all prime $p$. For $k \geq 3$, we have
\begin{align}
    \sum_{n \leq x} \mu^2(n) \sum_{d|n} h(d) &\leq (2 + o(1))\sum_{n \leq x} \mu^2(n) \sum_{\substack{d | n \\ d \leq n ^ \frac{1}{k}}} h(d) \label{Theorem 1.1}
\end{align}
as $x$ tends to infinity.
\end{theorem}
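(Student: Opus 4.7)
The plan is to expand both sides using the bijection $n \leftrightarrow (d, m)$ given by $n = dm$ with $(d, m) = 1$---the unique factorization of a squarefree $n$ with $d \mid n$---and then apply standard mean-value results for multiplicative functions. Swapping the order of summation, the LHS of \eqref{Theorem 1.1} becomes
\[
    \sum_d \mu^2(d)\, c^{\omega(d)}\, N(x/d; d), \qquad N(y; d) := \#\{m \leq y : \mu^2(m) = 1,\ (m, d) = 1\},
\]
and the condition $d \leq n^{1/k}$ translates to $d^{k-1} \leq m$, so the RHS is the same sum but with $m \in [d^{k-1}, x/d]$, forcing $d \leq x^{1/k}$.

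Substituting the classical asymptotic $N(y; d) = \tfrac{6}{\pi^2}\, y \prod_{p \mid d} \tfrac{p}{p+1} + O(2^{\omega(d)} \sqrt{y})$ reduces both sides to partial sums of the multiplicative function $f(d) := \mu^2(d)\, c^{\omega(d)} \prod_{p \mid d} \tfrac{p}{p+1}$: the LHS weighted by $x/d$ over $d \leq x$, and the RHS by $(x/d - d^{k-1})$ over $d \leq x^{1/k}$. The Dirichlet series $\sum_d f(d)/d^s$ has local factor $1 + c/((p+1) p^{s-1})$ at $p$, and hence factors as $\zeta(s)^c\, G(s)$ with $G$ holomorphic and nonvanishing in a neighborhood of $\Re(s) \geq 1$. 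Selberg--Delange then gives $\sum_{d \leq Y} f(d)/d \sim C(c) (\log Y)^c$ and $\sum_{d \leq Y} f(d) \sim C'(c)\, Y (\log Y)^{c-1}$. Applying these with $Y = x$ and $Y = x^{1/k}$ respectively, and checking that the $d^{k-1}$ correction term on the RHS and the $O(2^{\omega(d)} \sqrt{x/d})$ errors each contribute only $O(x(\log x)^{c-1})$ after summing in $d$---where the latter uses $2c < 1$, valid when $k \geq 3$---yields
\[
    \sum_{n \leq x} \mu^2(n) \sum_{d \mid n} h(d) \sim \frac{6 C(c)}{\pi^2}\, x(\log x)^c, \qquad \sum_{n \leq x} \mu^2(n) \sum_{\substack{d \mid n \\ d \leq n^{1/k}}} h(d) \sim \frac{6 C(c)}{\pi^2}\, k^{-c}\, x (\log x)^c.
\]

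The ratio of these main terms is $k^c$, and the hypotheses $k \geq 3$ and $c < 1/(k-1)$ force $k^c < k^{1/(k-1)} \leq \sqrt{3} < 2$, proving \eqref{Theorem 1.1}; in fact, the argument yields the sharper constant $k^c + o(1)$ in place of $2 + o(1)$. The main obstacle will be the uniform error control in the Selberg--Delange step: one must bound the accumulated error $\sum_d \mu^2(d) (2c)^{\omega(d)} \sqrt{x/d}$ and carefully evaluate the auxiliary weighted sum $\sum_{d \leq x^{1/k}} f(d)\, d^{k-1}$ via partial summation from the Selberg--Delange asymptotic for $\sum_{d \leq Y} f(d)$. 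A dyadic decomposition of the $d$-range together with the Shiu-type bound $f(d) \ll c^{\omega(d)}$ should render these estimates routine.
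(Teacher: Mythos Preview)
Your approach is essentially identical to the paper's: swap the order of summation, apply the coprime-squarefree count $N(y;d)$, evaluate the resulting weighted sums of $f(d)=\mu^2(d)\,c^{\omega(d)}\prod_{p\mid d}\tfrac{p}{p+1}$ via Selberg--Delange and partial summation, and arrive at the ratio $k^c$. One minor slip: the accumulated error $\sqrt{x}\sum_{d\le x}(2c)^{\omega(d)}\mu^2(d)/\sqrt{d}$ is of order $x(\log x)^{2c-1}$ rather than $x(\log x)^{c-1}$, but this is still $o\bigl(x(\log x)^c\bigr)$ since $c<1$, so your conclusion is unaffected (the paper uses the sharper error $O(\tau(d)^{2/3}\sqrt{y})$ in $N(y;d)$, giving exponent $2^{2/3}c-1$ instead of $2c-1$, but either suffices for $k\ge 3$).
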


This can be seen as saying that \eqref{main conj} holds ``on the average" over squarefree $n$ for a particular class of functions $h$. In view of Lemma 1 in \cite{ALLADI1989183}, it should be possible to show that the class of functions considered in Theorem \ref{thm:constant on primes case} are the sufficient case for \textit{all} choices of functions $h$ satisfying $0 \leq h(p) \leq \frac{1}{k - 1}$ for every prime $p$. We accomplish this task, but surprisingly, it was a slightly bigger undertaking in the averaged-out setting. Hence, we give it the title of theorem. 

\begin{theorem} 
\label{thm:Lemma 1 analog}
The quantity
    $$ R_{k,x}(h) = \left(\sum_{n \leq x} \mu^2(n) \sum_{\substack{d | n \\ d \leq n ^ \frac{1}{k}}} h(d) \right) \left(\sum_{n \leq x} \mu^2(n) \sum_{d|n} h(d) \right)^{-1}$$
    decreases as $h$ increases. 
\end{theorem}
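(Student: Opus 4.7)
The plan is to show monotonicity of $R_{k,x}(h)$ in the componentwise partial order on $h$ by verifying the single-prime condition $\partial R_{k,x}/\partial h(p) \leq 0$ at every prime $p$; since $R_{k,x}$ is smooth in each $h(p)$, monotonicity on axis-parallel segments implies full componentwise monotonicity. Write $N(h) = \sum_{n \leq x} \mu^2(n) \sum_{d \mid n,\, d \leq n^{1/k}} h(d)$ and $D(h) = \sum_{n \leq x} \mu^2(n) \sum_{d \mid n} h(d)$, so that $R_{k,x}(h) = N/D$. Setting $\tilde{N} = \partial N/\partial h(p)$ and $\tilde{D} = \partial D/\partial h(p)$, the derivative condition $\partial R_{k,x}/\partial h(p) \leq 0$ is equivalent to
\[
\tilde{N}(h)\, D(h) \;\leq\; N(h)\, \tilde{D}(h),
\]
and explicit differentiation (substituting $n = pm$ with $p \nmid m$, and $d = pe$) yields
\[
\tilde{N} = \sum_{\substack{m \leq x/p \\ p \nmid m}} \mu^2(m) \sum_{\substack{e \mid m \\ e \leq m^{1/k} p^{1/k - 1}}} h(e), \qquad \tilde{D} = \sum_{\substack{m \leq x/p \\ p \nmid m}} \mu^2(m) \sum_{e \mid m} h(e).
\]

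The inequality $\tilde{N} D \leq N \tilde{D}$ has a probabilistic meaning: under the measure $\nu$ assigning weight $\mu^2(n) h(d)$ to each pair $(n, d)$ with $d \mid n$ and $n \leq x$, the ``small divisor'' event $\{d \leq n^{1/k}\}$ and the event $\{p \mid d\}$ are negatively correlated. Heuristically this is clear because forcing $p \mid d$ multiplies $d$ by at least $p$, driving it past the threshold $n^{1/k}$. I would first establish the $n$-pointwise version of this correlation, in the spirit of Lemma~1 of \cite{ALLADI1989183}: for fixed squarefree $n = pm$ with $p \nmid m$, the bijection $d \leftrightarrow pd$ on divisors of $m$ (together with the tightened size bound $pe \leq n^{1/k} \iff e \leq m^{1/k} p^{1/k - 1}$) yields
\[
\frac{\sum_{d \mid n,\; p \mid d,\; d \leq n^{1/k}} h(d)}{\sum_{d \mid n,\; p \mid d} h(d)} \;\leq\; \frac{\sum_{d \mid n,\; p \nmid d,\; d \leq n^{1/k}} h(d)}{\sum_{d \mid n,\; p \nmid d} h(d)}.
\]

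The main obstacle will be passing from this $n$-pointwise ratio inequality to the corresponding sum-ratio inequality aggregated over $n \leq x$, since a family of per-$n$ ratio inequalities does not in general imply a weighted-sum ratio inequality without further input. My plan is to split each of $N, D, \tilde{N}, \tilde{D}$ according to whether $p \mid n$, use the factorizations $B_{pm}(h) = (1 + h(p)) B_m(h)$ and $A_{pm}(h) = A^{(+)}_{pm}(h) + h(p) A^{(-)}_{pm}(h)$ where $A^{(+)}_{pm}(h) = \sum_{d \mid m,\; d \leq (pm)^{1/k}} h(d)$ and $A^{(-)}_{pm}(h) = \sum_{d \mid m,\; d \leq m^{1/k} p^{1/k - 1}} h(d)$, and rewrite $\tilde{N} D - N \tilde{D}$ as an algebraic combination of the per-$n$ differences together with boundary cross terms arising from the mismatch between the ranges $m \leq x/p$ (appearing in $\tilde{N}, \tilde{D}$) and $n \leq x$ (appearing in $N, D$). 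The pointwise bounds $A^{(-)}_{pm} \leq A_m$ and $A^{(-)}_{pm} \leq A^{(+)}_{pm}$ make most of the resulting terms manifestly non-positive; the remaining boundary term, which geometrically couples an $n$ in the range $n \leq x/p$ with one in the range $x/p < n \leq x$, I expect to control either by a direct estimate exploiting that $A_n/B_n$ decreases with $n$ in an averaged sense, or by a symmetric pairing argument over configurations $(n_1, d_1), (n_2, d_2)$ in the spirit of an FKG-type negative correlation proof.
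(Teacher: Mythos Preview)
Your reduction to a single prime is the same as the paper's: writing the numerator and denominator as linear functions of $h(p)$ and asking that $AD-BC<0$ is exactly your condition $\tilde N\,D \le N\,\tilde D$, and your formulas for $\tilde N,\tilde D$ agree with the paper's $A,C$ after swapping the order of summation.

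The gap is everything after that. You correctly diagnose that a family of per-$n$ ratio inequalities need not imply the aggregated ratio inequality, and then you do not close this gap: the ``algebraic combination'' is not written down, the ``boundary term'' is not identified, and ``I expect to control either by \ldots\ or by \ldots'' is not a proof. In fact the paper warns against exactly this route in the introduction: the AEV argument works pointwise because the divisor sums factor through the prime $p$, and ``once averaging, this structure vanishes.'' Concretely, in $\tilde N D-N\tilde D$ the sums over $n$ in $N,D$ run over \emph{all} squarefree $n\le x$ (including those with $p\nmid n$ and those in $(x/p,x]$), while $\tilde N,\tilde D$ only see $m\le x/p$ with $p\nmid m$; your pointwise inequality says nothing about these extra $n$, and there is no FKG lattice structure on pairs $(n,d)$ that would make the negative-correlation heuristic rigorous here.

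What the paper actually does is entirely different in character: it evaluates $A,B,C,D$ \emph{asymptotically}. Using Proposition~\ref{prop:squarefree coprime sum} to replace the inner sums $\sum_{n}\mu^2(n)$ by $\tfrac{6}{\pi^2}g(d)x/d$ plus a controllable error, each of $A,B,C,D$ becomes $\tfrac{6}{\pi^2}x$ times a value of
\[
H(y,h,p)=\sum_{\substack{j\le y\\ p\nmid j}}\frac{\mu^2(j)g(j)h(j)}{j}
\]
at $y\in\{x,\ x/p,\ x^{1/k},\ x^{1/k}/p\}$, with errors shown to be lower order (this is where the case split on whether $H(\cdot,h,p)$ diverges, and Hal\'asz's theorem, enter). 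Then
\[
AD-BC \;=\;\Bigl(\tfrac{6}{\pi^2}+o(1)\Bigr)^2\frac{g(p)}{p}\,x^2
\Bigl(H\bigl(\tfrac{x^{1/k}}{p}\bigr)H(x)-H\bigl(x^{1/k}\bigr)H\bigl(\tfrac{x}{p}\bigr)\Bigr),
\]
and the sign is read off from Proposition~\ref{prop:calculus result} applied to $\gamma_N(t)=\mathcal H(t)\mathcal H(N/t)$ with $N=x^{1+1/k}/p$: since $\mathcal H$ is increasing and $x^{o(1)}$, $\gamma_N$ decreases on $(\sqrt N,N)$, and both $x$ and $x/p$ lie there. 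This is an asymptotic argument (the conclusion is $AD-BC<0$ for $x$ large, with $k\ge 3$ needed to control the error terms), not a finite combinatorial one; your proposal contains none of these ingredients.
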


This is indeed the analogue for Lemma 1 in \cite{ALLADI1989183}, and so the bigger $h$ is, the ``worse" the inequality \eqref{Theorem 1.1} is. This shows that the case considered by Theorem \ref{thm:constant on primes case} is indeed the sufficient case. 

Following the strategy of proof for \cite{ALLADI1989183}, we prove Theorem \ref{thm:Lemma 1 analog} by isolating the effect that the value of $h$ at one specific prime $p$ has on the quantity $R_{k,x}(h)$. The authors show that increasing the value of $h(p)$ decreases the value of $R_{k,x}(h)$, and so increasing the values of $h$ at each prime successively decreases the value of the $R_{k,x}(h)$. One might suspect the claim to be easier to prove in this averaged setting, but there is a factorization structure that AEV utilize which makes their proof terse. Once averaging, this structure vanishes. 

The paper is structured as follows. We first establish some preliminary propositions in Section \ref{sec:propositions}. These allow the proofs of the main theorems provided in Section \ref{sec:Proofs of Main Theorems} to be condensed. In Section \ref{sec:The Way Forward}, we discuss how one might tackle proving the entire conjecture. For smaller results unrelated to the main proofs, see the Appendix in \ref{sec:Appendix}.

\section{Acknowledgments} The author thanks Professor Krishnaswami Alladi for taking the time to look through the manuscript of someone who is no longer his student. 

\section{Propositions}
\label{sec:propositions}
 
 We will prove Theorem \ref{thm:constant on primes case} by simply evaluating the sums in the left-hand and right-hand sides of \eqref{Theorem 1.1} and comparing. For this, we require a means to evaluate several standard number theoretic sums, the first of which follows from \cite{selberg1954note}. 

\begin{proposition}
\label{prop:Selberg sums}
For $z \in \mathbb{C}$ with $\mathrm{Re}(z) > 0$, we have 
$$\sum_{n \leq x} z^{\omega(n)} \mu^2(n) = (1 + o(1)) \frac{x\log^{z - 1}{x}}{\Gamma(z)} f_0(z),$$
$$\sum_{n \leq x} z^{\omega(n)} \mu^2(n) g(n) = (1 + o(1)) \frac{x\log^{z - 1}{x}}{\Gamma(z)} f_1(z)$$
where 
$$g(m) = \prod_{p | m} \frac{p}{p + 1},$$
and
$$f_0(z) = \prod_{p} \left(1 + \frac{z}{p}\right)\left(1 - \frac{1}{p}\right)^z$$
$$f_1(z) = \prod_{p} \left(1 + \frac{z}{p + 1}\right)\left(1 - \frac{1}{p}\right)^z$$
\end{proposition}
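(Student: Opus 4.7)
The plan is to derive both asymptotics from the Selberg--Delange method, applied to the Dirichlet series attached to each summand; the two formulas differ only in which multiplicative correction factor gets absorbed into the ``analytic part'' of the factorization.

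For the first sum, I would introduce $F_0(s) = \sum_{n \geq 1} z^{\omega(n)} \mu^2(n) n^{-s}$. Since $z^{\omega(n)} \mu^2(n)$ is multiplicative and supported on squarefrees, its Euler product is $F_0(s) = \prod_p (1 + z p^{-s})$, convergent absolutely for $\operatorname{Re}(s) > 1$. I then isolate the singularity at $s = 1$ by writing $F_0(s) = \zeta(s)^z H_0(s)$, where $H_0(s) = \prod_p (1 + z p^{-s})(1 - p^{-s})^z$. A local Taylor expansion of each Euler factor gives $(1 + z p^{-s})(1 - p^{-s})^z = 1 + O_z(p^{-2\operatorname{Re}(s)})$, so $H_0$ extends to a holomorphic function on $\operatorname{Re}(s) > 1/2$, with $H_0(1) = f_0(z)$. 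Feeding this into the Selberg--Delange theorem (whose prototype is in the cited Selberg paper) yields the asymptotic with main term $H_0(1) x (\log x)^{z-1}/\Gamma(z)$.

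For the second sum, the argument is parallel. Since $g$ is multiplicative, its Dirichlet series is $F_1(s) = \prod_p \bigl(1 + zp/((p+1) p^s)\bigr)$, and I would write $F_1(s) = \zeta(s)^z H_1(s)$ with $H_1(s) = \prod_p \bigl(1 + zp/((p+1) p^s)\bigr)(1 - p^{-s})^z$. The same Taylor-expansion check shows each local factor equals $1 + O_z(p^{-2\operatorname{Re}(s)})$, so $H_1$ is holomorphic on $\operatorname{Re}(s) > 1/2$, and at $s=1$ the telescoping gives $H_1(1) = \prod_p (1 + z/(p+1))(1 - 1/p)^z = f_1(z)$. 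A second application of Selberg--Delange closes the argument.

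The only step that demands real care, and which I expect to be the main obstacle to writing down, is the verification that $H_0$ and $H_1$ enjoy polynomial growth in vertical strips just to the left of $\operatorname{Re}(s) = 1$, since that is what licenses the Hankel-contour deformation underlying Selberg--Delange. Given the $p^{-2s}$ bound on the local factors, this is routine but slightly annoying bookkeeping. The conceptual content of the proof is therefore the correct identification of $f_0(z)$ and $f_1(z)$ as the values $H_0(1)$ and $H_1(1)$ of the holomorphic corrections; everything else is classical input.
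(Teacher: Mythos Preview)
Your proposal is correct and follows essentially the same route as the paper: form the Dirichlet series, write down its Euler product, factor out $\zeta(s)^z$, and invoke the Selberg--Delange/contour argument to extract the main term from the singularity at $s=1$. The paper carries this out for the $f_1$ sum (noting $f_0$ is analogous) and is somewhat terser about the analytic verification, simply citing Selberg and appealing to ``the standard fashion'' for the contour step where you spell out the holomorphy and growth checks.
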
 

\begin{proof} We'll evaluate the expression involving $f_1$ as evaluating the expression involving $f_0$ can be done in an almost identical manner. Let $\beta(n) = z^{\omega(n)} \mu^2(n) g(n)$ and 
$$\mathcal{B}(s) := \sum_{n = 1}^{\infty} \frac{\beta(n)}{n^s}$$
We can see that $\beta(p^a) = z\frac{p}{p + 1}$ when $a = 1$ and $0$ if $a \geq 2$. Thus, $\mathcal{B}(s)$ admits the Euler product
\begin{align*}
  \mathcal{B}(s) &= \prod_{p} \left(1 + \frac{z \frac{p}{p + 1}}{p^s}\right) \\
\end{align*}
which converges if $\mathrm{Re}(s) > 0$ by comparison to the product considered by \cite{selberg1954note}
\begin{align*}
   \prod_{p} \left(1 + \frac{z }{p^s - 1}\right). \\
\end{align*}
We have 
\begin{align*}
  \mathcal{B}(s) &= \prod_{p} \left(1 + \frac{z \frac{p}{p + 1}}{p^s}\right) \\
                 &= \prod_{p} \left(1 - \frac{1}{p^s}\right)^{-z} \cdot \prod_{p} \left(1 - \frac{1}{p^s}\right)^z \left(1 + \frac{z \frac{p}{p + 1}}{p^s}\right)\\
                 &=\zeta(s)^z \prod_{p} \left(1 - \frac{1}{p^s}\right)^z \left(1 + \frac{z \frac{p}{p + 1}}{p^s}\right)\\
\end{align*}
Thus, the result follows from picking a suitable contour $C$ for the inverse Mellin transform 

$$\sum_{n \leq x} \beta(n) = \frac{1}{2 \pi i} \oint_{C} \mathcal{B}(s) \frac{x^s}{s} ds \\ $$

and considering the branch point and pole of $\zeta(s)^z$ at $s = 1$ in the standard fashion.

\end{proof}

We further require the following standard estimate which will allow us to handle a different yet similarly common collection of number theoretic sums. This is a standard result, the proof of which can be found many places. We are not claiming this proof as our own. 

\begin{proposition}
\label{prop:squarefree coprime sum}

For $m,x \geq 2,$ we have 
    \begin{align*}
        \sum_{\substack{n \leq x \\ \mathrm{gcd}(m,n) = 1}} \mu^2(n) = g(m)\frac{6}{\pi^2}x + O\left(\tau(m)^\frac{2}{3}\sqrt{x}\right).
    \end{align*}
\end{proposition}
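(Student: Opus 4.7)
The plan is to begin with the identity $\mu^2(n) = \sum_{d^2 | n} \mu(d)$, substitute it into the sum in question, and swap the order of summation. Since every $n$ counted is coprime to $m$, only those $d$ with $\gcd(d,m)=1$ contribute, and writing $n = d^2 j$ collapses the inner sum to counting integers $j \leq x/d^2$ with $\gcd(j,m)=1$. That latter count is a routine M\"obius inversion and equals $\frac{\phi(m)}{m} \cdot \frac{x}{d^2} + O(2^{\omega(m)})$.

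First I would substitute this approximation into the sum over $d$, isolating the main contribution from the error. The main term unfolds as
\[
  \frac{\phi(m)}{m}\, x \sum_{\substack{d \geq 1 \\ \gcd(d,m)=1}} \frac{\mu(d)}{d^2} \;=\; \frac{\phi(m)}{m}\, x \prod_{p \nmid m}\!\left(1 - \frac{1}{p^2}\right),
\]
and a routine Euler-product rearrangement, using $\zeta(2)^{-1} = 6/\pi^2$, simplifies this to $\frac{6}{\pi^2} g(m) x$ via
\[
  \frac{\phi(m)/m}{\prod_{p | m}(1-1/p^2)} \;=\; \prod_{p | m}\frac{1-1/p}{(1-1/p)(1+1/p)} \;=\; \prod_{p | m}\frac{p}{p+1} \;=\; g(m).
\]

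For the error, I would truncate the outer sum at a parameter $D \leq \sqrt{x}$ to be chosen. The $O(2^{\omega(m)})$ error from the inner count, summed over the $d \leq D$ with $\gcd(d,m)=1$, contributes $O(D \cdot 2^{\omega(m)})$. The portion $d > D$ is bounded trivially by $\sum_{d > D} x/d^2 = O(x/D)$, which also absorbs the completion of the $\mu(d)/d^2$-sum to infinity. Balancing yields $D \asymp \sqrt{x}/2^{\omega(m)/2}$ and a total error of $O(\sqrt{x}\cdot 2^{\omega(m)/2})$; since $2^{\omega(m)} \leq \tau(m)$, this is comfortably subsumed by $O(\tau(m)^{2/3}\sqrt{x})$.

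The argument is essentially routine; the only substantive steps are the Euler-product bookkeeping that identifies the constant as $\frac{6}{\pi^2} g(m)$, and the balancing of the two error sources. I do not expect any serious obstacle, which is consistent with the authors' remark that this is a standard result.
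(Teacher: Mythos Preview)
Your proposal is correct and follows essentially the same standard route as the paper: both expand $\mu^2(n)=\sum_{d^2\mid n}\mu(d)$, handle the coprimality condition by M\"obius inversion, and recover the constant $\tfrac{6}{\pi^2}g(m)$ via the same Euler-product manipulation. The only cosmetic difference is in the error bookkeeping---you truncate the $d$-sum at a parameter and balance to get $O\bigl(2^{\omega(m)/2}\sqrt{x}\bigr)$, while the paper carries the double sum over $d\mid m$ and $k$ to an error $\sqrt{x}\sum_{d\mid m}\mu^2(d)/\sqrt{d}$ and then bounds that divisor sum by $2\tau(m)^{2/3}$ via a Rankin-type split at $y=\tau(m)^{2/3}$; your route is in fact marginally sharper, since $2^{\omega(m)/2}\le\tau(m)^{1/2}$.
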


\begin{proof} Applying Mobius inversion and swapping the order of summation yields
\begin{align*}
        \sum_{\substack{n \leq x \\ \mathrm{gcd}(m,n) = 1}} \mu^2(n) 
        &= \sum_{d|m} \mu(d) \sum_{\substack{k \leq \sqrt{x} \\ \mathrm{gcd}(m,k) = 1}} \mu(k) \left\lfloor \frac{x}{dk^2} \right\rfloor.
\end{align*}
 Note that we can actually restrict the summing range to $k \leq \sqrt{\frac{x}{d}}$. The main term will come from the non-fractional part of the floor function term, which looks like 

\begin{align*}
   \sum_{d|m} \mu(d) \sum_{\substack{k \leq \sqrt{\frac{x}{d}} \\ \mathrm{gcd}(m,k) = 1}} \mu(k)  \frac{x}{dk^2} 
    &= x\sum_{d|m} \frac{\mu(d)}{d} \sum_{\substack{k \leq \sqrt{\frac{x}{d}} \\ \mathrm{gcd}(m,k) = 1}} \frac{\mu(k)}{k^2} \\
    &= x\sum_{d|m} \frac{\mu(d)}{d} \sum_{\substack{k = 1 \\ \mathrm{gcd}(m,k) = 1}}^{\infty} \frac{\mu(k)}{k^2} \\
    &+O\left(x\sum_{d|m} \frac{\mu^2(d)}{d} \sum_{k = \sqrt{\frac{x}{d}} }^{\infty} \frac{1}{k^2} \right)
\end{align*} 

This main term can be factored in the standard way as 

\begin{align*}
    x\sum_{d|m} \frac{\mu(d)}{d} \sum_{\substack{k = 1 \\ \mathrm{gcd}(m,k) = 1}}^{\infty} \frac{\mu(k)}{k^2} 
    &= x\prod_{p|m} \left(1 + \frac{1}{p}\right) \prod_{p} \left(1 - \frac{1}{p^2}\right) \\
    &= \frac{6}{\pi^2}g(m)x 
\end{align*}

which gives our standard main term, and the corresponding error can be controlled by bounding with an integral over a larger region, which gives
\begin{align*}
    x\sum_{d|m} \frac{\mu^2(d)}{d} \sum_{k = \sqrt{\frac{x}{d}} }^{\infty} \frac{1}{k^2} 
    &< x\sum_{d|m} \frac{\mu^2(d)}{d} \int_{k = \frac{\sqrt{\frac{x}{d}}}{2} }^{\infty} \frac{1}{X^2} dX \\ 
    &= 2\mathcal{E}(m)\sqrt{x}
\end{align*}
where
\begin{align*}
    \mathcal{E}(m) = \sum_{d|m} \frac{\mu^2(d)}{\sqrt{d}}
\end{align*}
Indeed, the error which comes from the aforementioned fractional parts is also $\mathcal{E}(m)\sqrt{x}$:

\begin{align*}
    \sum_{d|m} \mu^2(d) \sum_{k \leq \sqrt{\frac{x}{d}}} \mu^2(k) \left\{ \frac{x}{dk^2} \right\} &\leq \sqrt{x} \sum_{d|m} \frac{\mu^2(d)}{\sqrt{d}} \\
    &=  \mathcal{E}(m) \sqrt{x}
\end{align*} 
Since $\frac{1}{\sqrt{d}}$ is decreasing, we can bound $\mathcal{E}$ by picking a $1 \leq y \leq m$ and summing $1$ indiscriminately below $y$ and summing only $\frac{1}{\sqrt{y}}$ above $y$. This gives
\begin{align*}
    \mathcal{E}(m) 
    &\leq \sum_{d \leq y} 1 + \sum_{\substack{d|m \\ d \geq y}} \frac{1}{\sqrt{y}} \\
    &\leq y + \frac{\tau(m)}{\sqrt{y}}
\end{align*}
Picking $y = \tau(m)^{\frac{2}{3}}$, we get a bound of $\mathcal{E}(m) < 2\tau(m)^{\frac{2}{3}}$. Putting this together, we get an error of the claimed size. 
\end{proof}

We next require a small calculus result which is essential to the proof of Theorem \ref{thm:Lemma 1 analog}

\begin{proposition} 
\label{prop:calculus result}

Let $f: \mathbb{R}^+ \to \mathbb{R}^+$ be an increasing, differentiable function satisfying $f(x) = x^{o(1)}$, and define $\gamma_{N}(x) = f(x)f\left(\frac{N}{x}\right)$ for some large $N \in \mathbb{N}$. Then $\gamma_N$ has a critical point at $x = \sqrt{N}$ and is decreasing on $(\sqrt{N}, N)$.    
\end{proposition}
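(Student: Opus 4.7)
The plan is elementary calculus, split into the two claims.

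For the critical point, I would differentiate directly using the product and chain rules:
$$\gamma_N'(x) = f'(x) f(N/x) - \frac{N}{x^2} f(x) f'(N/x).$$
At $x = \sqrt{N}$, both arguments equal $\sqrt{N}$ and $N/x^2 = 1$, so the two terms cancel and $\gamma_N'(\sqrt{N}) = 0$, establishing the critical point. This is the easy half.

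For the decreasing property on $(\sqrt{N}, N)$, I would rearrange $\gamma_N'(x) < 0$, after dividing by $f(x) f(N/x) > 0$, into the equivalent form
$$\frac{x f'(x)}{f(x)} < \frac{(N/x) f'(N/x)}{f(N/x)}.$$
Setting $H(t) := t f'(t)/f(t) = t (\log f)'(t)$, the task becomes showing $H(x) < H(N/x)$ for $x \in (\sqrt{N}, N)$. Since this range forces $N/x \in (1, \sqrt{N})$, in particular $N/x < x$, it suffices to show that $H$ is decreasing on the relevant interval. This is where the hypothesis $f(x) = x^{o(1)}$ enters: it gives $\log f(x) = o(\log x)$, which combined with monotonicity of $f$ forces $H(t) \to 0$ as $t \to \infty$ and, for the sufficiently regular $f$ that appear in the applications (arising from Proposition \ref{prop:Selberg sums}), yields the required monotonic decrease of $H$.

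The main obstacle is passing from the integrated, asymptotic size condition $f = x^{o(1)}$ to a pointwise statement about the logarithmic derivative $H(t) = t(\log f)'(t)$. In full generality one might need a mild additional regularity hypothesis on $f$ (such as $\log f \circ \exp$ being eventually concave), but for the specific $f$ appearing in the proof of Theorem \ref{thm:Lemma 1 analog} the monotonicity of $H$ reduces to a direct computation in the relevant range.
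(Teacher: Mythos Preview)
Your approach is identical to the paper's: differentiate, factor out $f(x)f(N/x)/x$, and reduce the sign of $\gamma_N'$ to the inequality $H(x) < H(N/x)$ for $H(t)=tf'(t)/f(t)$, invoking $N/x<x$ on $(\sqrt{N},N)$. The paper likewise asserts that $H(t)=o(1)$ follows from $f(t)=t^{o(1)}$ and then passes directly to the inequality without justifying monotonicity of $H$, so the regularity concern you flag is a genuine gap in the paper's own argument rather than a shortcoming of your proposal.
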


\begin{proof} Differentiating, we obtain
\begin{align*}
    \gamma'_N(x) 
    &= f(x)f'\left(\frac{N}{x}\right)\left(-\frac{N}{x^2}\right) + f'(x)f\left(\frac{N}{x}\right) \\ 
    &= \frac{f(x)f\left(\frac{N}{x}\right)}{x} \left(\frac{xf'(x)}{f(x)} - \frac{\frac{N}{x}f'\left(\frac{N}{x}\right)}{f\left(\frac{N}{x}\right)}\right) \\
\end{align*}
    Plugging in $x = \sqrt{N}$ clearly yields $\gamma'_N = 0$. Moreover, note that $\frac{xf'(x)}{f(x)} = x \frac{d}{dx} \log{f(x)}$ which is $o(1)$ since $f(x) = x^{o(1)}$. Thus if $x \in (\sqrt{N}, N)$, $\frac{N}{x} < x$, so $\frac{xf'(x)}{f(x)} < \frac{\frac{N}{x}f'\left(\frac{N}{x}\right)}{f\left(\frac{N}{x}\right)}$, forcing $\gamma'_N$ to be negative on this interval.
\end{proof}

\section{Proofs of Main Theorems}
\label{sec:Proofs of Main Theorems}

\begin{proof}[Proof of Theorem 1.1]

For square-free $n$, we have $h(n) = c^{\omega(n)}$. Noting that all the divisors of a squarefree number must themselves be squarefree, we expand out the left hand side of \eqref{Theorem 1.1} and use Proposition \ref{prop:squarefree coprime sum} to get 
\begin{align}
    \sum_{n \leq x} \mu^2(n)\sum_{d|n} h(d) 
    &= \sum_{d \leq x} h(d) \mu^2(d) \sum_{\substack{d | n \\ n \leq x}} \mu^2(n) \nonumber \\
    &= \sum_{d \leq x} h(d) \mu^2(d) \sum_{m \leq \frac{x}{d}} \mu^2(dm) \nonumber \\
    &= \sum_{d \leq x} h(d) \mu^2(d) \sum_{\substack{m \leq \frac{x}{d} \\ \mathrm{gcd}(d,m) = 1}} \mu^2(m) \nonumber \\
    &= \frac{6}{\pi^2}x\sum_{d \leq x} \frac{\mu^2(d) h(d)g(d)}{d} + O\left(\sqrt{x}\sum_{d \leq x} \frac{\tau(d)^\frac{2}{3}\mu^2(d)h(d)}{\sqrt{d}}\right) \label{LHS of Theorem 1.1} 
\end{align}
Expanding out the right hand side of \eqref{Theorem 1.1}, we similarly have 
\begin{align}
    \sum_{n \leq x} \mu^2(n) \sum_{\substack{d | n \\ d \leq n ^ \frac{1}{k}}} h(d) \nonumber
    &= \sum_{d \leq x^{\frac{1}{k}}} h(d)\mu^2(d) \sum_{\substack{d^k \leq n \leq x \\ d|n}} \mu^2(n) \\
    &= \sum_{d \leq x^{\frac{1}{k}}} h(d)\mu^2(d) \sum_{d^k \leq md \leq x} \mu^2(md) \nonumber \\
    &= \sum_{d \leq x^{\frac{1}{k}}} h(d)\mu^2(d) \sum_{\substack{d^{k - 1} \leq m \leq \frac{x}{d} \\ \mathrm{gcd}(m,d) = 1}} \mu^2(m) \nonumber \\
    &= \frac{6}{\pi^2}x\sum_{d \leq x^{\frac{1}{k}}} \left(\frac{h(d)\mu^2(d)g(d)}{d}\right) - \frac{6}{\pi^2}\sum_{d \leq x^{\frac{1}{k}}} g(d)h(d)\mu^2(d) d^{k - 1} \label{RHS of Theorem 1.1}
\end{align}
\begin{align*}
    &+ O\left(\sum_{d \leq x^{\frac{1}{k}}} \left(\sqrt{\frac{x}{d}} - d^{\frac{k - 1}{2}}\right)h(d)\mu^2(d)\tau(d)^\frac{2}{3} \right). 
\end{align*}

To evaluate these sums, we apply the standard Abel summation formula and Proposition \ref{prop:Selberg sums} and obtain
\begin{align*}
    \sum_{d \leq x} \frac{g(d)h(d)\mu^2(d)}{d} 
    &= \left(\frac{f_1(c)}{\Gamma(c)} + o(1)\right)x\log^{c - 1}(x) \cdot \frac{1}{x} + \int_{1}^{x} \frac{\left(\frac{f_1(c)}{\Gamma(c)} + o(1)\right)X\log^{c - 1}(X)}{X^2} dX \\ 
    &=\left(\frac{f_1(c)}{\Gamma(c)} + o(1)\right)\log^{c - 1}(x) + \left(\frac{f_1(c)}{\Gamma(c)} + o(1)\right) \int_{1}^{x} \frac{\log^{c - 1}(X)}{X} dx \\ 
    &= \left(\frac{f_1(c)}{c\Gamma(c)} + o(1)\right)\log^{c}(x) \\ 
\end{align*}
and 
\begin{align*}
    \sum_{d \leq x} h(d)g(d)\mu^2(d)d^{k - 1} 
    &= x^{k - 1}\left(\frac{f_1(c)}{\Gamma(c)} + o(1)\right)x \log^{c - 1}(x) \\
    &- \int_{1}^{x} \left(\frac{f_1(c)}{\Gamma(c)} + o(1)\right)X \log^{c - 1}(X)(k - 1) X^{k - 2} dX \\ 
    &= \left(\frac{f_1(c)}{\Gamma(c)} + o(1)\right)x^{k}\log^{c - 1}{x} \\
    &- \left(\frac{(k - 1)f_1(c)}{\Gamma(c)} + o(1)\right)\int_{1}^{x} \log^{c - 1}(X) X^{k - 1} dX\\ 
    &= \left(\frac{f_1(c)}{k\Gamma(c)} + o(1)\right)x^{k}\log^{c - 1}{x} 
\end{align*}
where the last deduction follows from 
$$\int_{1}^{x} \log^{c - 1}(X) X^{k - 1} dX \sim \frac{1}{k}x^{k}\log^{c - 1}(x).$$

We now need to show that the two sums in our error terms are sufficiently smaller in magnitude. Because of the restricted range, the sum in the error term in \eqref{RHS of Theorem 1.1} is easily shown to be $O(x^\delta)$ for some $0 < \delta < 1$. More specifically, we have the following standard bounds for each of the arithmetic functions:
\begin{align}
    h(p) \leq \frac{1}{k - 1}, \tau(p) = 2, g(p) < \frac{3}{2} \label{pointwise bound for functions}
\end{align}

This and multiplicativity show that the product of the arithmetic functions in the error of \eqref{RHS of Theorem 1.1} are one-bounded for $k \geq 3$, hence

\begin{align*}
    O\left(\sum_{d \leq x^{\frac{1}{k}}} \left(\sqrt{\frac{x}{d}} - d^{\frac{k - 1}{2}}\right)h(d)\mu^2(d)\tau(d)^\frac{2}{3} \right) = O\left( x^{\frac{1}{k}} x^{\frac{1}{2}} \right)
\end{align*}
and we can take $\delta = \frac{1}{2} + \frac{1}{k}$ which is less than 1 for $k \geq 3$. With main terms of size $x\log^{O(1)}(x)$, we can handily discard this error. 

The error term in \eqref{LHS of Theorem 1.1} is being summed over a much larger range, and so it cannot be bounded as crudely. First, notice that, because our summand is only supported on squarefree numbers, we can use $\tau(d)^{\frac{2}{3}} = \left(2^\frac{2}{3}\right)^{\omega(d)}$ for squarefree $d$, making our summand

$$\frac{\left(2^\frac{2}{3}c\right)^{\omega(d)}\mu^2(d)}{\sqrt{d}}.$$
Letting $t_c = 2^\frac{2}{3}c$ and applying Proposition \ref{prop:Selberg sums} and Abel Summation, we have 

\begin{align*}
    \sum_{d \leq x} \frac{t_c^{\omega(d)}\mu^2(d)}{\sqrt{d}} 
    &=  \frac{1}{\sqrt{x}} \left(\frac{f_0(t_c)}{\Gamma(t_c)} + o(1)\right) x \log^{t_c - 1}(x) \\
    &+ \left(\frac{f_0(t_c)}{2\Gamma(t_c)} + o(1)\right)\int_{1}^{x} X\log^{t_c - 1}(X)\frac{1}{X^{\frac{3}{2}}}dX\\ 
    &= O\left(\sqrt{x}\log^{t_c - 1}(x)\right),
\end{align*}

and so the error from this sum is of size $O(x\log^{t_c - 1}{x})$. Our main terms are of order $x\log^{c}(x)$, so as long as $t_c - 1 < c$, this error is smaller than our main term. We solve algebraically to get that $c$ must satisfy $c < \frac{1}{2^{\frac{2}{3}} - 1} \approx 0.70241$. Since $c < \frac{1}{k - 1}$, as long as $k \geq 3$, our error term is small, as desired. Putting all these estimates together  yields
\begin{align*}
    x \sum_{d \leq x^{\frac{1}{k}}} \frac{h(d)g(d)\mu^2(d)}{d} - \sum_{d \leq x^{\frac{1}{k}}} h(d)g(d)\mu^2(d)d^{k - 1} 
    &= \frac{6}{\pi^2}\left(\frac{f_1(c)}{c\Gamma(c)} + o(1)\right)x\log^{c}(x^{\frac{1}{k}}) \\
    &= \frac{6}{\pi^2}\left(\frac{f_1(c)}{ck^{c}\Gamma(c)} + o(1)\right)x\log^{c}(x)
\end{align*}
for the RHS, and 
\begin{align}
    x \sum_{d \leq x} \frac{g(d)h(d)\mu^2(d)}{d} = \frac{6}{\pi^2}\left(\frac{f_1(c)}{c\Gamma(c)} + o(1)\right)x\log^{c}{x}. \label{LHS of Theorem 1.1 - evaluated}
\end{align}
for the LHS. Thus, we have 
\begin{align*}
  \sum_{n \leq x} \mu^2(d) \sum_{d|n} h(d) 
    &= (k^c + o(1))\sum_{n \leq x} \mu^2(d) \sum_{\substack{d | n \\ d \leq n ^ \frac{1}{k}}} h(d), \\
\end{align*}
and the constant 2 in Theorem \ref{thm:constant on primes case} follows from  
$$2 = \max_{\substack{k \geq 2 \\ 0 < c < \frac{1}{k - 1} }} k^c$$

\end{proof}

\begin{proof}[Proof of Theorem 1.2] For a fixed prime $p$, we have 

\begin{align*}
    \sum_{n \leq x} \mu^2(n) \sum_{d|n} h(d) 
    &= \sum_{d \leq x} \mu^2(d)h(d)\sum_{\substack{n \leq x \\ d|n}} \mu^2(n) \\
    &= \sum_{\substack{d \leq x \\ p|d}} \mu^2(d)h(d)\sum_{\substack{n \leq x \\ d|n}} \mu^2(d) + \sum_{\substack{d \leq x \\ p \nmid d}} \mu^2(d)h(d)\sum_{\substack{n \leq x \\ d|n}} \mu^2(n) \\
    &= \sum_{\substack{mp \leq x}} \mu^2(mp)h(mp)\sum_{\substack{n \leq x \\ mp|n}} \mu^2(d) + \sum_{\substack{d \leq x \\ p \nmid d}} \mu^2(d)h(d)\sum_{\substack{n \leq x \\ d|n}} \mu^2(n) \\
    &= h(p)\sum_{\substack{m \leq \frac{x}{p} \\ p\nmid m}} \mu^2(m)h(m)\sum_{\substack{n \leq x \\ mp|n}} \mu^2(n) + \sum_{\substack{d \leq x \\ p \nmid d}} \mu^2(d)h(d)\sum_{\substack{n \leq x \\ d|n}} \mu^2(n), \\
\end{align*}

and by the same argument,

\begin{align*}
    \sum_{n \leq x} \mu^2(n) \sum_{\substack{d|n \\ d \leq n^{\frac{1}{k}}}} h(d) 
    &= \sum_{d \leq x^{\frac{1}{k}}} \mu^2(d)h(d)\sum_{\substack{d^k \leq n \leq x \\ d|n}} \mu^2(n) \\
    &= h(p)\sum_{\substack{m \leq \frac{x^{\frac{1}{k}}}{p} \\ p\nmid m}} \mu^2(m)h(m)\sum_{\substack{(mp)^k \leq n \leq x \\ mp|n}} \mu^2(n) + \sum_{\substack{d \leq x^{\frac{1}{k}} \\ p \nmid d}} \mu^2(d)h(d)\sum_{\substack{d^k \leq n \leq x \\ d|n}} \mu^2(n) \\
\end{align*}

Note that neither of these expressions depend on the value of $h$ at $p$ other than instance of $h(p)$ at the front. Thus, we essentially have an expression of the form 
$$R_{k,x}(h) = \frac{Ah(p) + B}{Ch(p) + D}$$
By a simple calculus argument, this quantity will decrease as $h(p)$ increases so long as $AD - BC < 0$. We can evaluate each of these 4 sums by effectively the same means and Proposition \ref{prop:squarefree coprime sum}. For a prime $p$, let 
$$H(x,h,p) = \sum_{\substack{j \leq x \\ p \nmid j}} \frac{\mu^2(j)g(j)h(j)}{j},$$
We have 

\begin{align*}
    A: \sum_{\substack{m \leq \frac{x^{\frac{1}{k}}}{p} \\ p\nmid m}} \mu^2(m)h(m)\sum_{\substack{(mp)^k \leq n \leq x \\ mp|n}} \mu^2(n) &= \frac{6}{\pi^2}x\frac{g(p)}{p}H\left(\frac{x^{\frac{1}{k}}}{p}, h, p\right) + \text{error} \\
    B: \sum_{\substack{d \leq x^{\frac{1}{k}} \\ p \nmid d}} \mu^2(d)h(d)\sum_{\substack{d^k \leq n \leq x \\ d|n}} \mu^2(n) &= \frac{6}{\pi^2}xH(x^{\frac{1}{k}}, h, p) + \text{error} \\
    C: \sum_{\substack{m \leq \frac{x}{p} \\ p\nmid m}} \mu^2(m)h(m)\sum_{\substack{n \leq x \\ mp|n}} \mu^2(n) &= \frac{6}{\pi^2}x\frac{g(p)}{p}H\left(\frac{x}{p}, h, p\right) + \text{error}\\
    D: \sum_{\substack{d \leq x \\ p \nmid d}} \mu^2(d)h(d)\sum_{\substack{n \leq x \\ d|n}} \mu^2(n) &= \frac{6}{\pi^2}xH(x,h,p) + \text{error}\\
\end{align*}

where, for simplicity's sake for now, we have condensed into ``error" all of the terms we suspect to not make up the main terms.

The errors in $A$ and $B$ can be handled similarly to each other and to how the shorter sum was computed in the proof of Theorem \ref{thm:constant on primes case}: by applying \eqref{pointwise bound for functions} and showing that the $(mp)^k$ term (resp. $d^k$ term) doesn't contribute significantly to the main term of $A$ (resp. $B$). We'll evaluate $A$ directly, and the evaluation of $B$ transpires similarly. We have 
\begin{align*}
     A &= \frac{6}{\pi^2}x\frac{g(p)}{p}H\left(\frac{x^{\frac{1}{k}}}{p}, h, p\right) -  \frac{6}{\pi^2}p^{k-1}g(p)\sum_{\substack{m \leq \frac{x^{\frac{1}{k}}}{p} \\ p \nmid m}}\mu^2(m)h(m)g(m)m^{k-1} \\ &+ O\left(\sum_{\substack{m \leq \frac{x^{\frac{1}{k}}}{p} \\ p \nmid m}} \left(\sqrt{\frac{x}{m}} - d^\frac{k - 1}{2}\right)\frac{\mu^2(m)h(m)\tau(m)^\frac{2}{3}}{\sqrt{m}}\right) \\
\end{align*}
Using the bounds \eqref{pointwise bound for functions}, we can get a one-bound for the product of the arithmetic functions here and bound the error as in the proof of Theorem \ref{thm:constant on primes case}:
\begin{align*}
    \sqrt{x}\sum_{\substack{m \leq \frac{x^{\frac{1}{k}}}{p} \\ p \nmid m}} \frac{\mu^2(m)h(m)\tau(m)^\frac{2}{3}}{\sqrt{m}} 
    &= O(x^\frac{1}{2}x^{\frac{1}{k}}) \\
\end{align*}
which is $o(x)$ for $k \geq 3$. We will see that our main term will at worst be proportionate to $x$, making this sufficient.

For the $m^{k-1}$ sum, we want to show that it doesn't contribute significantly to the main term, or quantitatively that  
$$\sum_{\substack{m \leq \frac{x^{\frac{1}{k}}}{p} \\ p \nmid m}}\mu^2(m)h(m)g(m)m^{k-1} = o\left(xH\left(\frac{x^{\frac{1}{k}}}{p}, h, p\right)\right)$$
To do this, we split into two cases: 

\begin{enumerate}
    \item $\lim_{x \to \infty} H(x,h,p) = \infty$
    \item $\lim_{x \to \infty} H(x,h,p)$ converges to a finite value.
\end{enumerate}

Assume we're in case 1. We simply employ \eqref{pointwise bound for functions} once again. 
\begin{align*}
    \sum_{\substack{m \leq \frac{x^{\frac{1}{k}}}{p} \\ p \nmid m}}\mu^2(m)h(m)g(m)m^{k-1} 
    &\leq \sum_{\substack{m \leq \frac{x^{\frac{1}{k}}}{p}}}m^{k-1} \\
    &= \left(\frac{1}{kp^k} + o(1)\right)x
\end{align*}
If $H$ tends to infinity with $x$, then the factor of $H$ in our main term makes it slightly larger than linear. Hence, under this condition, the $m^{k-1}$ sum doesn't contribute significantly to the main term.

Now, assume we're in case 2. Our main term is now directly proportionate to $x$, requiring our error to be $o(x)$. It's a result of Halasz \cite{halasz1968mittelwerte} that if $f$ is a multiplicative function supported on squarefree numbers and $\sum_{p} \frac{f(p)}{p}$ converges, then $\sum_{n \leq x} f(n) = o(x)$. Letting $f(n) = \mathbf{1}_{p \nmid n} \mu^2(n)g(n)h(n)$, the Case 2 hypothesis shows that $\sum_{n} \frac{f(n)}{n}$ converges, and therefore so must the sum over primes. This together with Abel summation handily shows that, under case 2, 
$$\sum_{\substack{m \leq \frac{x^{\frac{1}{k}}}{p} \\ p \nmid m}}\mu^2(m)h(m)g(m)m^{k-1} = o(x)$$

To rigorously evaluate $C$ and $D$, we'll evaluate $D$, and $C$ will transpire similarly. Following in the footsteps of the proof of Theorem 1.1, we have 
\begin{align*}
    \sum_{\substack{d \leq x \\ p \nmid d}} \mu^2(d)h(d)\sum_{\substack{n \leq x \\ d|n}} \mu^2(n) &= \sum_{\substack{d \leq x \\ p \nmid d}} \mu^2(d)h(d)\sum_{\substack{m \leq \frac{x}{d} \\ \gcd(d,m) = 1}} \mu^2(m) \\
    &= \frac{6}{\pi^2}x\sum_{\substack{d \leq x \\ p \nmid d}} \frac{\mu^2(d)h(d)g(d)}{d} + O\left(\sqrt{x} \sum_{\substack{d \leq x \\ p \nmid d}} \frac{\tau(d)^{\frac{2}{3}}\mu^2(d)h(d)}{\sqrt{d}}\right) \\
    &= \frac{6}{\pi^2}xH(x,h,p) + O\left(\sqrt{x} \sum_{\substack{d \leq x \\ p \nmid d}} \frac{\tau(d)^{\frac{2}{3}}\mu^2(d)h(d)}{\sqrt{d}}\right)
\end{align*}
Similarly to the evaluation of $A$, we need to show that 
$$  \sum_{\substack{d \leq x \\ p \nmid d}} \frac{\tau(d)^{\frac{2}{3}}\mu^2(d)h(d)}{\sqrt{d}} = o\left(\sqrt{x}H(x,h,p)\right)$$

We split into the same two cases. For case 1, we proceed by invoking \eqref{pointwise bound for functions} once more to obtain
\begin{align*}
    \sum_{\substack{d \leq x \\ p \nmid d}} \frac{\tau(d)^{\frac{2}{3}}\mu^2(d)h(d)}{\sqrt{d}}
    &\leq \sum_{\substack{d \leq x}} \frac{1}{\sqrt{d}} \\
    &=(2 + o(1)) \sqrt{x},
\end{align*}
so again, we are saving over the main term by a factor of $H(x,h,p)$, which in case 1 is sufficient.

Assuming we're in case 2, we proceed almost identically to before except now we have to deal with the $\tau^{2/3}(d)$ term. We know that $\sum_{p} \frac{\mu^2(p)g(p)h(p)}{p}$ converges, and since $\frac{\tau^{2/3}(p)}{g(p)} \leq \frac{3 \cdot 2^{2/3}}{2}$, the sum we must consider, namely $\sum_{p} \frac{\mu^2(p)\tau^{2/3}(p)h(p)}{p}$, is at most a constant multiple larger, and hence it is also convergent. Thus, under case 2, Halasz' result applies. It, together with Abel summation, yields
$$\sum_{\substack{d \leq x \\ p \nmid d}} \frac{\tau(d)^{\frac{2}{3}}\mu^2(d)h(d)}{\sqrt{d}} = o\left(\sqrt{x}\right)$$

Putting this altogether, we finally have 
$$AD - BC = \left(\frac{6}{\pi^2} + o(1)\right)\frac{g(p)}{p}x \left(H\left(\frac{x^{\frac{1}{k}}}{p},h,p\right) H(x,h,p) - H\left(x^{\frac{1}{k}},h,p\right) H\left(\frac{x}{p}\right)\right)$$
In view of \eqref{pointwise bound for functions} and in comparison to the $x$-th harmonic sum, we can clearly see that $H(x,h,p) = O(\log(x)) = x^{o(1)}$. It's certainly increasing, but in order to apply Proposition \ref{prop:calculus result}, we need differentiability, and $H$ is defined in a discrete manner which makes it discontinuous. We can define $\mathcal{H}(x,h,p)$ to be the function which is extended to all positive real numbers, is increasing, continuous and differentiable, and agrees with $H(x,h,p)$ at positive integer values of $x$. This is possible since we interpolate only countably many values. Applying Proposition \ref{prop:calculus result} with $N = \frac{x^{1 + \frac{1}{k}}}{p}$ and $f(x) = \mathcal{H}(x,h,p)$, we have 
$$AD - BC = \left(\frac{6}{\pi^2} + o(1)\right)x\frac{g(p)}{p} \left(\gamma_N(x) - \gamma_N\left(\frac{x}{p}\right)\right)$$
Both $x$ and $\frac{x}{p}$ are greater than $\sqrt{N}$, and so $\gamma_N$ decreases from  $\frac{x}{p}$ to $x$. Thus, 
$\gamma_N(x) - \gamma_N\left(\frac{x}{p}\right)$ is negative, and so too is $AD - BC$ for sufficiently large $x$.

\end{proof}

\section{The Way Forward} 
\label{sec:The Way Forward}

It still remains to show the original conjecture of AEV, namely \eqref{main conj}. Though Soundararajan's result is stronger, AEV's initial analysis seems to provide a more transparent means for future improvement, as evidenced by the many analogies between their paper and this one. To see how to improve, it is important to note how they arrived at their conjecture. 

Loosely speaking, at one point in their proof, AEV write a natural number $n$ as the product of $k$ natural numbers $d_1,...d_k$, i.e. $n = d_1 \cdot ... \cdot d_k$. They note that by a simple Pigeonhole Principle argument, at least one of the $d_i$ must satisfy $d_i \leq n^{1/k}$. However, on average, one should expect roughly half, or $k/2$, of the $d_i$ to satisfy $d_i \leq n^{1/k}$ (this is a sort of ``symmetry point" for such a product). This change in a factor of $k/2$ is where they anticipate most of the loss in their method comes from (indeed: $\frac{2k}{k/2} = 4$). Thus, if one could somehow quantify this heuristic in a meaningful way, it could be used to resolve the conjecture fully. 

Define

\[g_k(n) := \sum_{d_1 \cdot ... \cdot d_k = n} |\{ i \text{ }|\text{ }d_i \leq n^{1/k} \}| \]

This quantity can be seen as the mean number of ``small" divisors present in a random $k$-fold product representation of $n$. The trivial bound based on these sort of Pigeonhole Principle arguments is 

\[\tau_k(n) \leq g_k(n) \leq (k - 1)\tau_k(n)\]
where, as usual,
\[\tau_k(n) := \sum_{d_1 \cdot ... \cdot d_k = n} 1\]
is the $k$-fold divisor function. Given the previously stated heuristic, one might expect 

\begin{align}
    g_k(n) = \left(\frac{k}{2} + o(1)\right) \tau_k(n) \label{conj for $g_k$}
\end{align}
where, say, $o(1) \to 0$ as $\omega(n) \to \infty$. Though there isn't an argument presently for whether \eqref{conj for $g_k$} directly implies \eqref{main conj}, it certainly seems like a useful object of study, if only morally speaking. Improving on the trivial bound given current techniques (and those explored in \cite{ALLADI1989183}) seems challenging. A new idea is likely needed.

\section{Appendix}
\label{sec:Appendix}

It requires a bit of extra work to show that \eqref{main conj} actually implies \eqref{Theorem 1.1}. Suppose \eqref{main conj} held. Let $f_k$ be a function tending to 0 as its input tends to infinity so that we may rewrite \eqref{main conj} as 
\begin{align*}
\sum_{d|n} h(d) \leq (4 + f_k(\omega(n))) \sum_{\substack{d|n \\ d \leq n^{\frac{1}{k}}}} h(d)   
\end{align*}
The constants in the main terms will be different, but summing over squarefree $n \leq x$, it essentially suffices to show 
\begin{align*}
     \sum_{n \leq x} \mu^2(d)f_k(\omega(n)) \sum_{\substack{d|n \\ d \leq n^{\frac{1}{k}}}} h(d)
     &= o\left(\sum_{n \leq x} \mu^2(d) \sum_{\substack{d|n \\ d \leq n^{\frac{1}{k}}}} h(d) \right).
\end{align*}
Expanding gives
\begin{align*}
    \sum_{n \leq x} \mu^2(d)f_k(\omega(n)) \sum_{\substack{d|n \\ d \leq n^{\frac{1}{k}}}} h(d) &= \sum_{d \leq x^{\frac{1}{k}}} h(d) \mu^2(d) \sum_{\substack{d^k \leq n \leq x \\ d|n}} \mu^2(n) f_k(\omega(n)) \\
    &= \sum_{d \leq x^{\frac{1}{k}}} h(d) \mu^2(d) \sum_{d^{k - 1} \leq m \leq \frac{x}{d}} \mu^2(md) f_k(\omega(md)) \\
    &\leq \sum_{d \leq x^{\frac{1}{k}}} h(d) \mu^2(d) \sum_{m \leq \frac{x}{d}} f_k(\omega(m)) \\ 
\end{align*}
(Note: $f_k(\omega(md) \leq f_k(\omega(m) + \omega(d)) \leq f_k(\omega(m)$). If we had $\sum_{m \leq y} f_k(\omega(m)) = o(y)$, this last sum would then become 
\begin{align*}
    & o\left(x \sum_{d \leq x^{\frac{1}{k}}} \frac{h(d)\mu^2(d)}{d}\right) \\
    &=o\left( x \log^{c}(x)\right)
\end{align*}
by Proposition \ref{prop:Selberg sums} and partial summation. From the analysis in Section \ref{sec:Proofs of Main Theorems}, we know that both the left hand and right hand sums in \eqref{Theorem 1.1} have a main term of size $x \log^{c}(x)$, and so it remains to show $\sum_{m \leq y} f_k(\omega(m)) = o(y)$.

We implement the Erdos-Kac theorem, found in \cite{erdos1940gaussian}:

$$\lim_{x \to \infty} \frac{1}{x} |\{n \leq x : a \leq \frac{\omega(n) - \log\log(n)}{\sqrt{\log\log(n)}} \leq b\}| = \Phi(a,b)$$
where $\Phi(a,b)$ is the Gaussian distribution function. Pick a slow-growing function $a(x)$ (for our purposes, any choice with $a(x) = o(\sqrt{\log\log(x)})$ will do), and let 
$$A(x) = \{n \leq x : -a(x) \leq \frac{\omega(n) - \log\log(n)}{\sqrt{\log\log(n)}} \leq a(x)\}$$
We have 
\begin{align*}
    \sum_{m \leq x} f_k(\omega(m)) 
    &= \sum_{\substack{m \leq x \\ m \notin A(x)}} f_k(\omega(m)) + \sum_{\substack{m \leq x \\ m \in A(x)}} f_k(\omega(m)) \\
    &\leq \sum_{m \leq x} f_k\left(\log\log(n) - a(x)\sqrt{\log\log{n}}\right) + O_k\left(x(1 - \Phi(-a(x), a(x))\right) \\
    &= o(x).
\end{align*}

We also fall into a neat proof of a relation between $f_0$ and $f_1$, the functions from Proposition \ref{prop:Selberg sums}:

\begin{proposition} For $c > 0$, we have 
    $$f_1(c) = \frac{\pi^2}{6}f_0(1+c)$$
\end{proposition}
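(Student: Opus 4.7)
The plan is to verify the identity directly by forming the ratio $f_1(c)/f_0(1+c)$ as a single Euler product and simplifying each local factor algebraically; the identity turns out to be essentially the Euler product for $\zeta(2)$.

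First I would write
\begin{align*}
\frac{f_1(c)}{f_0(1+c)} = \prod_p \frac{\left(1+\tfrac{c}{p+1}\right)\left(1-\tfrac{1}{p}\right)^c}{\left(1+\tfrac{1+c}{p}\right)\left(1-\tfrac{1}{p}\right)^{1+c}},
\end{align*}
so the $(1-1/p)^c$ factors cancel, leaving a $(1-1/p)^{-1}$ in the denominator. Next I would rewrite each remaining piece over a common denominator: $1+\frac{c}{p+1}=\frac{p+1+c}{p+1}$, $1+\frac{1+c}{p}=\frac{p+1+c}{p}$, and $1-\frac{1}{p}=\frac{p-1}{p}$. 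The factors of $p+1+c$ cancel between numerator and denominator, and the resulting local factor simplifies to
\begin{align*}
\frac{p^2}{(p+1)(p-1)} = \frac{p^2}{p^2-1} = \frac{1}{1-p^{-2}}.
\end{align*}

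Taking the product over all primes then gives $\prod_p (1-p^{-2})^{-1} = \zeta(2) = \pi^2/6$, which is the claimed constant. The only step that requires a brief justification is that the manipulations above are valid as an identity of convergent Euler products; this follows because $f_0$ and $f_1$ are absolutely convergent for $\mathrm{Re}(z)>0$ by Proposition \ref{prop:Selberg sums}, and the local-factor simplification is a finite algebraic identity at each prime $p$, so one may pass to the infinite product termwise. There is no real obstacle here, the content of the proposition is simply the recognition that the ratio telescopes to $\zeta(2)$.
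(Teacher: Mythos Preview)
Your proof is correct, and in fact the paper explicitly remarks that ``there is a way of deriving this from just the product expressions for $f_0$ and $f_1$ themselves'' --- which is exactly what you have done --- before presenting a deliberately different argument.

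The paper's route is indirect: it evaluates the same quantity $\sum_{n\le x}\mu^2(n)\sum_{d\mid n}h(d)$ in two ways. On one hand, from the analysis in the proof of Theorem~\ref{thm:constant on primes case} (specifically equation~\eqref{LHS of Theorem 1.1 - evaluated}) this sum is asymptotically $\tfrac{6}{\pi^2}\bigl(\tfrac{f_1(c)}{c\Gamma(c)}+o(1)\bigr)x\log^c x$. On the other hand, for squarefree $n$ one has $\sum_{d\mid n}h(d)=(1+c)^{\omega(n)}$, so Proposition~\ref{prop:Selberg sums} applied directly with $z=1+c$ gives $\bigl(\tfrac{f_0(1+c)}{c\Gamma(c)}+o(1)\bigr)x\log^c x$. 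Matching the two leading constants forces $f_1(c)=\tfrac{\pi^2}{6}f_0(1+c)$. What this buys is a conceptual explanation: the identity falls out as a consistency check between two evaluations of a natural number-theoretic sum, tying it back to the machinery already developed. Your direct local-factor computation, by contrast, is shorter, entirely elementary, and makes transparent that the content of the proposition is nothing more than the Euler product for $\zeta(2)$ in disguise.
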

There is a way of deriving this from just the product expressions for $f_0$ and $f_1$ themselves, but the following argument still seemed interesting. 
\begin{proof} Since the following divisor sum is complete in $d$ (and $n$ is squarefree), we can write 
\begin{align*}
    \sum_{d|n} h(d) 
    &= \sum_{j = 1}^{\omega(n)} {\omega(n) \choose j} c^{j} \\
    &= (1 + c)^{\omega(n)} \\
\end{align*}
and then applying Proposition \ref{prop:Selberg sums} becomes a lot more straightforward:
\begin{align*}
    \sum_{n \leq x} \mu^2(n) \sum_{d|n} h(d) 
    &= \sum_{n \leq x} \mu^2(n) (1 + c)^{\omega(n)} \\
    &= \left(\frac{f_0(1 + c)}{\Gamma(1 + c)} + o(1)\right)x\log^{c}(x) \\
    &= \left(\frac{f_0(1 + c)}{c\Gamma(c)} + o(1)\right)x\log^{c}(x) \\
\end{align*}
The claim follows from comparing this to \eqref{LHS of Theorem 1.1 - evaluated}.
\end{proof}

\bibliographystyle{alpha}
\bibliography{mylib}

\end{document}